\newtheorem*{theorem*}{Theorem}
\newtheorem*{lemma*}{Lemma}
\newtheorem*{condition*}{Condition}
\newtheorem{theorem}{Theorem}[section]
\newtheorem{lemma}[theorem]{Lemma}
\newtheorem{definition}[theorem]{Definition}
\def\del{\partial}
\def\ra{\rightarrow}
\def\eps{\epsilon}
\newcommand{\CC}{\mathbb{C}}
\def\o{\omega}
\newcommand{\myfrac}[2]{\genfrac{}{}{}{0}{#1}{#2}}
\newcommand{\bino}[2]{\genfrac{(}{)}{0pt}{0}{#1}{#2}}
\begin{document}
\title{Analytic Hilbert fields and hermitian Hilbert bundles}
\author{Dat Tran}
\date{2014}
\maketitle

\section{Introduction}
This paper was directly motivated by the paper \cite{L-S} of Lempert and Sz\H{o}ke. In \cite{L-S}, the authors defined the notion of a smooth Hilbert field. Hilbert field was introduced by Godement in \cite{Go}. It also appeared in \cite{Di} with a continuous structure attached to it. The smooth Hilbert field can be thought of as a generalization of a smooth hermitian Hilbert bundle with a hermitian connection. Smooth Hilbert fields has a curvature, just like a Hilbert bundle endowed with a connection on it. In \cite{L-S}, Lempert and Sz\H{o}ke defined what it means to be an analytic Hilbert field and proved that any analytic Hilbert field with curvature zero induces a Hilbert bundle. They also provided an example of a Hilbert field with curvature zero, yet does not induce any Hilbert bundle.  In this paper, we will give an example of an analytic Hilbert field which does not come from any Hilbert bundle. This example occured as a direct image of a bundle in the dissertation of the author, but in this paper, it will be presented in a more general form.

For the sake of completeness, we first provide the definitions of a Hilbert bundle, Hilbert field, smooth and analytic Hilbert field. Most of these definitions are also in \cite{L-S}.

\begin{definition}
A Banach manifold is a Hausdorf space $M$ with an open cover $\mathcal{U}$ such that for every $U \in \mathcal{U}$, we are given a Banach space $B_U$ and a homeomorphism $\varphi_U$ between $U$ and an open subset $V_U$ of $B_U$. These pairs $(U,\varphi_U)$ are called coordinate charts and it is required that for every two charts $(U,\varphi_U),(U',\varphi_{U'})$, the composition $\varphi_{U'} \circ \varphi_{U}^{-1}$ should be smooth wherever it is defined.
\end{definition}

\begin{definition}
Let $M$ and $N$ be two Banach manifolds. A map $f: M \ra N$ is smooth if for any pair of charts $(U,\varphi_U)$, $(V,\psi_V)$ on $M$, resp. $N$, the map $\psi_V\circ f \circ \varphi_U^{-1}$ is smooth where it is defined.
\end{definition}

\begin{definition}
A smooth Hilbert bundle is a smooth map $\pi:H \ra S$ of Banach manifolds, each fiber $\pi^{-1}s$ is endowed with the structure of a complex vector space; for each $s \in S$, there should exist an open neighborhood $U \subset S$, a complex Hilbert space $X_U$ and a smooth map $F_U: \pi^{-1}U \ra X_U$ whose restriction to each fiber $\pi^{-1}s$, $s \in U$ is linear and $\pi \times F: \pi^{-1}U \ra U \times X_U$ is diffeomorphic.
\end{definition}

We use $\Gamma^{\infty}(S,H)$ (or $\Gamma^{\infty}(H)$ when there is no ambiguity about the base)  to denote the set of smooth sections of the smooth Hilbert bundle $H\ra S$.

\begin{definition}
A smooth Hermitian metric on a smooth Hilbert bundle $\pi: H \ra S$ is a function $h: H \oplus H \ra \CC$ such that any local trivialization $F_U: \pi^{-1}U \ra X_U$ as in the previous definition can be chosen so that $h(u,v) = \langle F(u), F(v) \rangle$ where $u,v$ are in the same fiber and $\langle,\rangle$ is the inner product of $X_U$. We call the pair $(H,h)\ra S$ a hermitian Hilbert bundle.
\end{definition}

Let $S$ be a smooth manifold, we use $\textup{Vect}S$ to denote the set of smooth complex valued vector fields on $S$. If $(H,h)\ra S$ is a smooth hermitan Hilbert bundle and $\varphi,\psi \in \Gamma^{\infty}(H)$, then $h(\varphi,\psi) \in C^{\infty}(S)$.

\begin{definition}
A connection $\nabla$ on a Hilbert bundle $H \ra S$ associates with every $\xi \in \textup{Vect} S$ a linear map $\nabla_{\xi}: \Gamma^{\infty}(H) \ra \Gamma^{\infty}(H)$. It is required that for every local trivialization \newline $F_U: \pi^{-1}U \ra U \times X_U$, there be a smooth map $A: \CC \otimes TU \ra U \times \textup{End}X_U$, mapping fiber to fiber, linear on each fiber $\CC \otimes T_sU$ such that on $U$, 
\[
F_U(\nabla_{\xi}\varphi) = \xi F_U(\varphi) + A(\xi)F_U(\varphi)
\]
for every $\varphi \in \Gamma^{\infty}(S,H)$.

If $(H,h) \ra S$ is a hermitian Hilbert bundle and $\nabla$ is a connection on $H$, then $\nabla$ is a hermitian connection if for every $\xi \in \textup{Vect}S$, and for every smooth section $\varphi, \psi$ of $H$, we have $\xi h(\varphi,\psi) = h(\nabla_{\xi}\varphi,\psi) + h(\varphi,\nabla_{\bar{\xi}}\psi)$.
\end{definition} 

Now, we go over the definitions of Hibert fields.

\begin{definition}
A Hilbert field is a map between sets $p: H \ra S$ such that for all $s \in S$, there is a Hilbert space structure endowed on $H_s=p^{-1}s$.
\end{definition}

\begin{definition}[Smooth Hilbert field]\label{D:Hfieldsmooth}
Let $S$ be a smooth manifold, and $H \ra S$ a Hilbert field. A smooth structure on $H$ is given by specifying a set $\Gamma^{\infty}$ of sections of $H$, closed under adddition and under multiplication by elements of $C^{\infty}(S)$, and linear operators $\nabla_{\xi}:\Gamma^{\infty} \ra \Gamma^{\infty}$ for each $\xi \in \textup{Vect}S$ such that for $\xi,\eta \in \textup{Vect}S$, $f\in C^{\infty}(S)$, $\varphi,\psi \in \Gamma^{\infty}$, the following conditions are satisfied.
\begin{enumerate}
\item $\nabla_{\xi +\eta} \varphi = \nabla_{\xi}\varphi + \nabla_{\eta}\varphi$; $\nabla_{(f\xi)}\varphi=f\nabla_{\xi}\varphi$; $\nabla_{\xi}(f\varphi) = (\xi f)\varphi +f\nabla_{\xi}\varphi$.
\item $h(\varphi,\psi) \in C^{\infty}(S,\CC)$ and $\xi h(\varphi,\psi) = h(\nabla_{\xi}\varphi,\psi) + h(\varphi,\nabla_{\bar{\xi}}\psi)$.
\item The set $\{\varphi(s): \varphi \in \Gamma^{\infty} \}$ is dense in $H_s$ for all $s \in S$.
\end{enumerate}
\end{definition}

\begin{definition}
Let $H \ra S$ be a Hilbert field with a smooth structure $(\Gamma^{\infty},\nabla)$, and $(\tilde{H},\tilde{h})\ra S$ be a hermitian Hilbert bundle with a hermitian connection $\tilde{\nabla}$. If there is a fiber preserving map $F: H \ra \tilde{H}$ such that $F|H_s$ are isometric for all $s$, $F\varphi \in \Gamma^{\infty}(S,\tilde{H}) \quad \forall \varphi \in  \Gamma^{\infty}$, and $\tilde{\nabla}_{\xi}(F\varphi) = F(\nabla_{\xi}\varphi), \quad\forall \varphi \in  \Gamma^{\infty}, \forall \xi \in \textup{Vect}~S$, we then say that $H\ra S$ induces from the hermitian Hilbert bundle $(\tilde{H},\tilde{h}) \ra S$.
\end{definition}

Let $H \ra S$ be a Hilbert field with smooth structure $(\Gamma^{\infty}, \nabla)$. For every $\xi, \eta \in \textup{Vect} S$, $\varphi \in \Gamma^{\infty}$, we set
\[
R(\xi,\eta) \varphi := \nabla_{\xi}\nabla_{\eta}\varphi - \nabla_{\eta}\nabla_{\xi}\varphi - \nabla_{[\xi,\eta]}\varphi. 
\]
and call the operator $R$ the curvature of this smooth Hilbert field.
It was proved in \cite{L-S}, that $R(\xi,\eta)\varphi(s)$ only depends on $\xi(s),\eta(s)$, and $\varphi(s)$. 
Therefore, for each $\xi$, $\eta$ in $T_s S \otimes \CC$, $R(\xi,\eta)$ is a densely defined operator on $H_s$, which we also denote $R(\xi,\eta)$. 

If $(H,h)\ra S$ is a hermitian Hilbert bundle, then the curvature of the bundle is defined exactly as the definition of the curvature of smooth Hilbert field above. Yet, it is known that for every $\xi,\eta \in \textup{Vect}S$, the operator $R(\xi,\eta)$ is a bounded operator on each fiber.

\begin{definition}[Analytic section of smooth Hilbert field with analytic base and analytic Hilbert field]
Let $S$ be a finite dimensional analytic manifold and $H\ra S$ a smooth Hilbert field with smooth structure $(\Gamma^{\infty},\nabla)$.
\begin{enumerate}
\item A section $\varphi \in \Gamma^{\infty}$ is said to be analytic if for any compact subset $C$ of $S$ and any finite subset $\Xi$ of $\textup{Vect}~S$ such that every $\xi \in \Xi$ is analytic in a neighborhood of $C$, there is an $\eps >0$ such that
\begin{equation}\label{D:anal-inq}
	\textup{sup}~\frac{\eps^m}{m!}h(\nabla_{\xi_m}\ldots\nabla_{\xi_1}\varphi)^{1/2}(s) < \infty
\end{equation}
where the \textup{sup} is taken over $m=0,1,\ldots$; $\xi_j \in \Xi$; and $s \in C$.
\item The set of analytic sections of $S$ is denoted $\Gamma^{\o}$.
\item If the set $\{\varphi(s): \varphi \in \Gamma^{\o} \}$ is dense in $H_s$ for all $s\in S$, then this Hilbert field is said to be analytic.
\end{enumerate}
\end{definition}

The method of defining analytic sections like above can also be applied in defining real-analytic functions. For examples of this, please see \cite{KP}.

\section{Example of an anlytic Hilbert field not coming from any bundle}

Let $S = \CC$ and $H \ra S$ be a Hilbert field such that every fiber $H_s$ is infinite dimensional and seperable. Define $\varphi_j$, $j = 0,1,\ldots$ be sections of this Hilbert field such that for each $s$, $\{\varphi_j(s)\}$ form an orthonormal basis on $H_s$.
Let 
\[
\Gamma^{\infty} =\left \{\sum_{l=0}^{k}a_l(s)\varphi_l| k < \infty, a_l\in C^{\infty}(S) \right\}.
\]
Since the set of linear combinations of an orthonormal basis will be dense in any Hilbert space, the set $\{ \varphi(s) | \varphi \in \Gamma^{\infty} \}$ is dense in $H_s$.

Let $k$ be a complex-valued, real-analytic function on $S$.
Let $\xi = \myfrac{\del}{\del s}$ and define:
\[
\nabla_{\xi}\varphi_j = (j+1)k\varphi_j
\qquad
\textup{and}
\qquad
\nabla_{\bar{\xi}}\varphi_j = -(j+1)\bar{k}\varphi_j.
\]
Since $\{ \xi(s) \}$ span $T^{(1,0)}_s S$ for all $s$, we can extend by linearity and get $\nabla_{\eta}: \Gamma^{\infty} \ra \Gamma^{\infty}$ as required by the definition of smooth structure for all $\eta \in \textup{Vect}S$. So, $(\Gamma^{\infty}, \nabla)$ is a smooth structure of the Hilbert field $H \ra S$.

\begin{lemma}\label{L:HR1}
All the sections $\varphi_j$ are analytic and consequently, the Hilbert field $H \ra S$ constructed above is analytic. Moreover, if $g$ is a real-valued, analytic, non-harmonic function, and $k=\myfrac{\del g}{\del s}$ then this Hilbert field does not induce any hermitian Hilbert bundle.
\end{lemma}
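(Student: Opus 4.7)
My plan is threefold: first reduce the analyticity of each $\varphi_j$ to a scalar growth estimate, then deduce density of analytic sections in every fiber, and finally compute the curvature to derive a contradiction with the fiberwise boundedness of bundle curvature.

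For the first step I would observe that $\nabla_\eta\varphi_j=(j+1)K(\eta)\varphi_j$ with $K(a\xi+b\bar\xi):=ak-b\bar k$, so a routine induction using the Leibniz identity in the smooth-structure axioms yields $\nabla_{\xi_m}\cdots\nabla_{\xi_1}\varphi_j = F_m\,\varphi_j$, where $F_0\equiv 1$ and $F_m = \xi_m F_{m-1}+(j+1)K(\xi_m)F_{m-1}$. Since $h(\varphi_j,\varphi_j)=1$, the inequality (\ref{D:anal-inq}) for $\varphi_j$ reduces to producing constants $M,\epsilon>0$, depending on $j$, $C$, and $\Xi$, with $|F_m(s)|\le M\,m!/\epsilon^m$ uniformly for $s\in C$ and $\xi_l\in\Xi$. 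I would obtain these by first noting that, since $k$ and each $\xi\in\Xi$ are real-analytic in a neighborhood of $C$, each $K(\xi_l)$ is real-analytic there and Cauchy-type estimates on a complex extension give $|\xi_{i_1}\cdots\xi_{i_p}K(\xi_l)(s)|\le A\,p!/\rho^p$ for fixed $A,\rho>0$, and then running an induction on $m$ that tracks how Leibniz distributes the $\xi$-derivatives across the accumulating factors of $(j+1)K(\xi_l)$. This combinatorial bookkeeping is the step I expect to be the principal obstacle, although morally it amounts to the standard stability statement that iterating first-order operators with real-analytic coefficients preserves real-analytic growth. Once every $\varphi_j\in\Gamma^{\omega}$, finite sums $\sum a_l(s)\varphi_l$ with analytic $a_l$ also lie in $\Gamma^{\omega}$ and remain dense in each fiber, so the Hilbert field is analytic.

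Next I would compute the curvature on the generators. Using $[\xi,\bar\xi]=0$ and the eigenrelations for $\varphi_j$, a direct calculation gives
\[
R(\xi,\bar\xi)\varphi_j = \nabla_\xi\nabla_{\bar\xi}\varphi_j-\nabla_{\bar\xi}\nabla_\xi\varphi_j = -(j+1)\Bigl[\frac{\partial\bar k}{\partial s}+\frac{\partial k}{\partial\bar s}\Bigr]\varphi_j.
\]
With $k=\partial g/\partial s$ and $g$ real-valued, the identity $\bar k=\partial g/\partial\bar s$ forces both bracketed terms to equal $\partial^2 g/(\partial s\,\partial\bar s)=\tfrac14\Delta g$, so $R(\xi,\bar\xi)\varphi_j = -\tfrac12(j+1)\Delta g\cdot\varphi_j$.

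Finally, to rule out any inducing bundle I would pick $s_0$ with $\Delta g(s_0)\neq 0$, available because $g$ is non-harmonic. In the orthonormal basis $\{\varphi_j(s_0)\}$ the operator $R(\xi,\bar\xi)$ at $s_0$ is then diagonal with eigenvalues $-\tfrac12(j+1)\Delta g(s_0)$, and hence unbounded. If $F\colon H\to\tilde H$ realized $H\to S$ as induced from a hermitian Hilbert bundle with hermitian connection, then $F$ would intertwine the two curvatures and restrict to a fiberwise isometry, forcing the bundle curvature at $s_0$ to carry the same unbounded eigenvalues. This contradicts the fiberwise boundedness of bundle curvature recalled just before the definition of analytic sections, so $H\to S$ is not induced from any hermitian Hilbert bundle.
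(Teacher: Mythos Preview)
Your approach is correct and essentially the same as the paper's: both reduce analyticity of $\varphi_j$ to a scalar growth bound via the recursion $F_m=\xi_mF_{m-1}+(j+1)K(\xi_m)F_{m-1}$, and both finish by computing $R(\xi,\bar\xi)\varphi_j=-\tfrac12(j+1)\Delta g\,\varphi_j$ and invoking unboundedness at a point where $\Delta g\neq 0$. The ``combinatorial bookkeeping'' you flag as the principal obstacle is exactly where the paper spends its effort, expanding $F_m$ explicitly as a sum over what it calls $k$-splittings of $\{1,\dots,m\}$ and then bounding term by term using the Cauchy-type estimates you cite; your sketch identifies the right ingredients but leaves this step to be carried out.
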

\begin{proof}
The goal here is to show that $\varphi_j$ are analytic, i.e. for each fixed $j$, for every compact set $K \subset S$, there is a $\delta >0$ such that
\[
\textup{sup}\myfrac{\delta^m}{m!}h(\nabla_{\eta_1}\ldots\nabla_{\eta_m}\varphi_j)^{1/2}(s) < \infty,
\]
where the sup is taken over all $m=0,1,\ldots$, $\eta_1,\dots,\eta_m \in \left\{\myfrac{\del}{\del s},\myfrac{\del}{\del\bar{s}}\right\}$, and all $s\in K$. It was shown in Corollary 3.3.4 of \cite{L-S} that we only need to prove this for $\eta_1,\dots,\eta_m \in \left\{\myfrac{\del}{\del s},\myfrac{\del}{\del\bar{s}}\right\}$, instead of for $\eta_1,\dots,\eta_m$ in any possible predetermined finite subset of $\textup{Vect}^{\omega}~S$.

Let $f$ be a smooth function on $S$, for $\eta \in \left\{\myfrac{\del}{\del s},\myfrac{\del}{\del\bar{s}}\right\}$, we have
\[
\nabla_{\eta}(f\varphi_j) = (\eta f)\varphi_j + a(\eta)f\varphi_j
\]
where
\[
a\left(\myfrac{\del}{\del s}\right) = (j+1)k
\quad \textup{ and } \quad
a\left(\myfrac{\del}{\del\bar{s}}\right) = -(j+1)\bar{k}
\]

Fix $K\subset S$, a nonnegative integer $j$, and an analytic function $f$ defined in a neighborhood of $K$. 
Since $f,g$ are all analytic in a neighborhood of $K$, there is an $\epsilon \in (0,1)$ and an $M>1$ such that
\begin{equation}\label{E:analytic1}
\hbox{sup}\myfrac{\epsilon^m}{m!}|\eta_1\ldots \eta_m h(s)| < M
\end{equation}
where the sup is taken over all $m=0,1,\ldots$; $\eta_1,\dots,\eta_m \in \left\{\myfrac{\del}{\del s},\myfrac{\del}{\del\bar{s}}\right\}$; \newline $h \in \left\{f, (j+1)k, -(j+1)\bar{k} \right\}$; and all $s\in K$. (This is a result in \cite{KP})

Before the next step of the proof, we want to introduce some notations.
\begin{enumerate}
\item If $I \subset \{ 1,\dots,m\}$ for $m \geq 1$ and $I = \{ i_1, \ldots, i_l\}$ where $ l \leq m$ and $ i_1 > \ldots > i_l$, then $\eta_I = \eta_{i_1}\ldots\eta_{i_l}$. We also write $\eta_{\emptyset}=id$.
\item $a_{i} = a(\eta_{i})$.
\item A collection of $I_1,\ldots, I_l$ is a sub-splitting of $\{1,\ldots,m\}$ if $I_{\alpha}\cap I_{\beta} = \emptyset$ for $\alpha \neq \beta$ and $\bigcup_{\alpha}I_{\alpha}~\subseteq~\{1,\ldots, m\}$. $I_{\alpha}$ can be empty set.
\item A $k$-splitting of $m$ consists of a sub-splitting $I_1,\ldots,I_k$ and $i_1>\ldots>i_{k-1}$ of $\{1,\ldots,m\}$ such that every element of $I_{\alpha}$ is larger than $i_{\alpha}$ for $\alpha<k$ and $\bigcup_{\alpha} I_{\alpha} \cup \{i_1, \ldots, i_{k-1}\} = \{ 1,\ldots,m\}$. Every $m$ has $k$-splittings for $k=1,\ldots,m+1$. For $m=0$, the empty set is the unique 1-splitting of $m$.
\end{enumerate}

Let $m$ be a non-negative integer. For $k=1,\ldots,m+2$, a $k$-splitting of $(m+1)$ is of the following two types.
\begin{enumerate}
\item A $k$-splitting $(I_1,\ldots,I_k;i_1,\ldots,i_{k-1})$ is of type 1 if $\bigcup_{\alpha}I_\alpha~\subset~\{1,\ldots,m\}$.
This means $i_1 = m+1$ due to $i_1> \ldots > i_{k-1}$. Because any element of $I_{1}$ is larger than $i_{1}$, $I_1 = \emptyset$. Suppose $k=1$, then  $\bigcup_{\alpha} I_{\alpha} \cup \{i_1, \ldots, i_{k-1}\} = \emptyset \cup \emptyset \neq \{1, \ldots, m+1 \}$ and we have a contradiction. So, no $1$-splitting is of type 1. Hence, any $k$-splitting $(I_1,\ldots,I_k;i_1,\ldots,i_{k-1})$ of $m+1$ of type 1 coresponds to a $(k-1)$-splitting of $m$ which is $(I_2,\ldots,I_k;i_2,\ldots,i_{k-1})$. Vice versa, for any $(k-1)$-splitting of $m$, we have a unique type 1 $k$-splitting of $m+1$ by adjoining $I_1 = \emptyset$ and $i_1=m+1$ to it. So there is a $1-1$ correspondence between type 1 $k$-splitting of $m+1$ and $(k-1)$-splitting of $m$.
\item Type 2 $k$-splittings are those that are not type 1. It means that if $(I_1,\ldots,I_k;i_1,\ldots,i_{k-1})$ is type 2, then $m+1 \in \bigcup_{\alpha} I_{\alpha}$. And so, there must be an $\alpha$ such that $I_{\alpha}$ contains $m+1$. If we replace this $I_{\alpha}$ by $\tilde{I}_{\alpha}= I_{\alpha} \setminus \{m+1\}$ we get a $k$-splitting of $m$. Suppose $k=m+2$, then by counting, $\{i_1,\ldots,i_{k-1}\} = \{ 1,\ldots,m+1\}$ and so there cannot be any $I_{\alpha}$ that contain $m+1$. Hence, there is no type 2 $(m+2)$-splitting. From these facts, we can see that for each $k$ in $\{1,\ldots,m+1\}$ there is a $1-k$ correspondence between the $k$-splitting of $m$ and the $k$-splitting of $m+1$ of type 2 as follow: for each $k$-splitting $(I_1,\ldots,I_k;i_1,\ldots,i_{k-1})$ of $m$, we acquire $k$ different $k$-splittings of $m+1$ by inserting $m+1$ to the begining of each $I_{\alpha}$.
\end{enumerate}

Let $\eta_1,\dots,\eta_m \in \left\{\myfrac{\del}{\del s},\myfrac{\del}{\del\bar{s}}\right\}$. For any $f$ as above, we define $T^{m} \in C^{\infty}(S)$.
We define
\[
T^{m} = \sum\limits_{k=1}^{m+1} \sum (\eta_{I_1}a_{i_1})\ldots (\eta_{I_{k-1}}a_{i_{k-1}})\eta_{I_k}f
\]
where the inner sum is taken over all distinct $k$-splittings,
\[
S_{1}^{m} = \sum\limits_{k=2}^{m+1} \sum (\eta_{I_1}a_{i_1})\ldots (\eta_{I_{k-1}}a_{i_{k-1}})\eta_{I_k}f
\]
where the inner sum is taken over all distinct type 1 $k$-splittings, and
\[
S_{2}^{m} = \sum\limits_{k=1}^{m} \sum (\eta_{I_1}a_{i_1})\ldots (\eta_{I_{k-1}}a_{i_{k-1}})\eta_{I_k}f
\]
where the inner sum is taken over all distinct type 2 $k$-splittings. 
Clearly, 
\begin{equation}
T^{m} = S_{1}^{m} + S_{2}^{m}.
\end{equation}
We have
\begin{equation}
S_{1}^{m+1} = \sum\limits_{k=2}^{m+2} \sum (\eta_{I_1}a_{i_1})\ldots (\eta_{I_{k-1}}a_{i_{k-1}})\eta_{I_k}f
           = a_{m+1} T^{m}
\end{equation}
as explained when we described the first collection.
We also have
\begin{equation}
S_{2}^{m+1} = \sum\limits_{k=1}^{m+1} \sum (\eta_{I_1}a_{i_1})\ldots (\eta_{I_{k-1}}a_{i_{k-1}})\eta_{I_k}f
\end{equation}
where we can group terms in the inner sum into distinct groups of $k$-splitting of $m+1$, each group consists of elements correspond to the same $k$-splitting of $m$. For each $k$-splitting $(I_1,\ldots,I_k;i_1,\ldots,i_{k-1})$ of $m$, $\eta_{m+1}[(\eta_{I_1}a_{i_1})\ldots (\eta_{I_{k-1}}a_{i_{k-1}})\eta_{I_k}f]$ gives us the sum of all the $k$-splittings of $m+1$ correspondsis to it. So 
\begin{flalign}
S_{2}^{m+1} &= \sum\limits_{k=1}^{m+1} \sum (\eta_{I_1}a_{i_1})\ldots (\eta_{I_{k-1}}a_{i_{k-1}})\eta_{I_k}f \\
         &= \sum\limits_{k=1}^{m+1} \sum \eta_{m+1}[(\eta_{I_1}a_{i_1})\ldots (\eta_{I_{k-1}}a_{i_{k-1}})\eta_{I_k}f] \\
         &= \eta_{m+1}[\sum\limits_{k=1}^{m+1} \sum (\eta_{I_1}a_{i_1})\ldots (\eta_{I_{k-1}}a_{i_{k-1}})\eta_{I_k}f] \\
         &= \eta_{m+1} T^{m}
\end{flalign}
where the inner sums of (3.5) and (3.6) are taken over all possible $k$-splitting of $m$.

\textbf{Claim:} $\quad \nabla_{\eta_m}\ldots\nabla_{\eta_1}(f\varphi_j)= T^{m}\varphi_j$.

We prove the above claim by induction. 

For $m=1$, $T^1 = \eta_{1}f + a_{1}f$ and $\nabla_{\eta_{1}}\varphi_j = (\eta_{1}f)\varphi_j + (a_{1}f)\varphi_j$. Clearly, the claim is true. 

Suppose the claim is true for $m$. We have
\begin{flalign*}
\nabla_{\eta_{m+1}}\ldots\nabla_{\eta_1}(f\varphi_j) &= \nabla_{\eta_{m+1}}(\nabla_{\eta_m}\ldots\nabla_{\eta_1}(f\varphi_j))
                                                  = \nabla_{\eta_{m+1}}(T^{m} \varphi_j) \\
                                                 &= \eta_{m+1}(T^{m})\varphi_j + a_{m+1}T^{m} \varphi_j 
                                                  = S_{2}^{m+1} \varphi_j + S_{1}^{m+1} \varphi_j 
                                                  = T^{m+1} \varphi_j.
\end{flalign*}

Therefore, we have
\begin{equation}\label{E:nablafphij}
\nabla_{\eta_m}\ldots\nabla_{\eta_1}(f\varphi_j) =[\sum\limits_{k=1}^{m+1} \sum (\eta_{I_1}a_{i_1})\ldots (\eta_{I_{k-1}}a_{i_{k-1}})\eta_{I_k}f]\varphi_j,
\end{equation}
where the inner sum is taken over all of the $k$-splittings of $m$.

Fix a $k$ in $\{1,\ldots, m+1 \}$ and pick any set of positive integers $l_1, \ldots , l_k$ such that $l_1+\ldots+l_k=m+1$. We call a summand in (\ref{E:nablafphij}) with $|I_1|= l_1 -1,\ldots,|I_{k-1}|=l_{k-1}-1, |I_k|=l_k$ a term of type $(l_1,\ldots,l_k)$. From equation (\ref{E:analytic1}), each of the terms of type $(l_1,\ldots,l_k)$ is bounded above by $$\myfrac{M^k}{\eps^{m+1-k}}(l_1-1)!\ldots(l_k-1)!.$$
The number of terms of type $(l_1,\ldots,l_k)$ is at most $\bino{m+1}{l_1;\ldots;l_k}$. So the sum of all the terms of type $(l_1,\ldots,l_k)$ is bounded above by
$$\bino{m+1}{l_1;\ldots;l_k}\myfrac{M^k}{\eps^{m+1-k}}(l_1-1)!\ldots(l_k-1)!. $$
For each fixed $k$, the number of choices of $(l_1,\ldots,l_k)$ is $\bino{m}{k-1}$. And hence we can bound both sides of equation (\ref{E:nablafphij}) for any $s \in K$ as below
\begin{flalign*}
&h(\nabla_{\eta_1}\ldots\nabla_{\eta_m}\varphi_j)^{1/2}(s)\\
& =\sum\limits_{k=1}^{m+1} \bino{m}{k-1}\bino{m+1}{l_1;\ldots;l_k}\myfrac{M^k}{\eps^{m+1-k}}(l_1-1)!\ldots(l_k-1)! \\
& \leq \sum\limits_{k=1}^{m+1}\bino{m}{k-1}(m+1)!\myfrac{M^k}{\eps^{m+1-k}} \\
& \leq \sum\limits_{k=0}^{m}\bino{m}{k}(m+1)!\myfrac{M^{k+1}}{\eps^{m-k}} \\
& \leq \myfrac{(m+1)!M}{\eps^{m}}\sum\limits_{k=0}^{m}\bino{m}{k}(M\eps)^{k} \\
& \leq (m+1)!M(\myfrac{1+M\eps}{\eps})^m
\end{flalign*}

Pick $\delta = \myfrac{\eps}{2(1+M\eps)}$, we get
\begin{flalign*}
& \myfrac{\delta^m}{m!}h(\nabla_{\eta_1}\ldots\nabla_{\eta_m}\varphi_j)^{1/2}(s) \\
& \leq \left(\myfrac{\eps}{2(1+M\eps)}\right)^{m}\myfrac{1}{m!}(m+1)!M(\myfrac{1+M\eps}{\eps})^m \\
& \leq (m+1)M \left(\myfrac{1}{2}\right)^{m}   \longrightarrow 0 \textup{ as } m \ra \infty.
\end{flalign*}

So, we conclude 
\[
\hbox{sup}\myfrac{\delta^m}{m!}h(\nabla_{\eta_1}\ldots\nabla_{\eta_m}\varphi_j)^{1/2}(s) < \infty,
\]
where the sup is taken over all $m=0,1,\ldots$; $\eta_1,\dots,\eta_m \in \left\{\myfrac{\del}{\del s},\myfrac{\del}{\del\bar{s}}\right\}$; and all $s\in K$.

Taking $f\equiv 1$, this shows that $\varphi_j$ is an analytic section of $S$ for any fixed $j$. Since $\{\varphi_j(s)_{j=0}^{\infty}\}$ spans $H_s$ for any $s$, we have $H\ra S$ is an analythic Hilbert field.

Now, if $g$ is real-analytic and non-harmonic, and $k=\myfrac{\del g}{\del s}$, then the curvature operator $R\left(\myfrac{\del}{\del s},\myfrac{\del}{\del\bar{s}}\right)$ are unbounded operators at points where $\Delta g \neq 0$. This means that this Hilbert field does not induce any Hilbert bundle.

\end{proof}

\bibliographystyle{amsalpha}
\bibliography{paper1}
\end{document}